\documentclass{amsart}

\usepackage[T1]{fontenc}
\usepackage[utf8]{inputenc}
\usepackage{lmodern}
\usepackage{microtype}
\usepackage{mathtools}
\usepackage{amssymb}
\usepackage{enumitem}
\usepackage{placeins}
\usepackage{xcolor}
\usepackage{tikz}
\usetikzlibrary{arrows.meta,positioning}
\usepackage[bookmarks,colorlinks,breaklinks]{hyperref}
\usepackage[nameinlink,capitalise,noabbrev]{cleveref}

\usepackage{graphics,colortbl}

\setlist{nosep}

\newcommand{\N}{\mathcal N}

\newcommand{\FSIN}{\operatorname{FSIN}}
\newcommand{\SIN}{\operatorname{SIN}}

\newcommand{\e}{e}

\tikzset{
  qnode/.style={draw=blue!55!black,rounded corners=2pt,
    minimum width=17mm,minimum height=8mm,align=center,
    inner sep=2pt,font=\small},
  openq/.style={qnode,fill=gray!8},
  provedq/.style={qnode,fill=green!15,very thick},
  frontierq/.style={qnode,fill=yellow!20,very thick},
  falseq/.style={qnode,fill=red!10,double,double distance=0.7pt},
  implication/.style={-{Latex[length=2mm]},semithick,
    draw=blue!55!black},
  equivalence/.style={Latex-Latex,semithick,draw=blue!55!black}
}

\theoremstyle{plain}
\newtheorem{theorem}{Theorem}[section]

\newtheorem{lemma}[theorem]{Lemma}
\newtheorem{corollary}[theorem]{Corollary}
\newtheorem{criterion}[theorem]{Criterion}

\theoremstyle{definition}
\newtheorem{definition}[theorem]{Definition}
\newtheorem{question}{Question}

\theoremstyle{remark}

\crefname{criterion}{criterion}{criteria}
\crefname{question}{Question}{Questions}

\title[Countable uniformly discrete sets in FSIN groups]
  {Countable uniformly discrete sets in functionally balanced groups}

\author{Dekui Peng}
\address{Institute of Mathematics, Nanjing Normal University,
  Nanjing 210024, China}
\email{pengdk10@lzu.edu.cn}

\author{Li-Hong Xie}
\address{School of Mathematics and Computational Science, Wuyi University,
  Jiangmen, Guangdong 529000, P.R. China}
\email{yunli198282@126.com}

\author{Jiang Yang}
\address{School of Mathematical Sciences, Guangxi Minzu University,
  Nanning 530006, P.R. China}
\email{yangjjiangdy@126.com}
\thanks{Jiang Yang is the corresponding author.}

\date{August 13, 2026}

\subjclass[2020]{Primary 22A05; Secondary 54E15, 54H11, 43A10}
\keywords{Functionally balanced group, SIN group, $\omega$-narrow group,
  uniformly discrete set, thin set, precompact set,
  left and right uniformities}

\begin{document}

\begin{abstract}
We prove that every countable left uniformly discrete subset of a Hausdorff
functionally balanced topological group is right thin. As applications, we
answer two questions of Bouziad and Troallic: every left precompact subset of
a Hausdorff functionally balanced group is right precompact, and every
Hausdorff $\omega$-narrow functionally balanced group is a SIN group. 
\end{abstract}

\maketitle

\section{Introduction}

Let $G$ be a Hausdorff topological group. The left and right uniformities of
$G$ are generated, respectively, by the entourages
$\{(x,y)\in G\times G:x^{-1}y\in V\}$ and $\{(x,y)\in G\times G:xy^{-1}\in V\}$, where
$V$ ranges over the identity neighbourhoods of $G$. These two uniformities
need not coincide. A group for which they do coincide is called
\emph{balanced}, or a \emph{SIN group}. Equivalently, every identity
neighbourhood of a SIN group contains an identity neighbourhood invariant
under all inner automorphisms.

The comparison of these two uniformities through uniformly continuous
real-valued functions goes back to Comfort and Ross. In 1966 they asserted
that if every real-valued left uniformly continuous function on a
topological group is right uniformly continuous, then the left and right
uniformities coincide \cite{ComfortRoss1966}. Following Protasov, a
topological group is called \emph{functionally balanced}, or \emph{FSIN},
if every bounded real-valued left uniformly continuous function is right
uniformly continuous. Thus every SIN group is FSIN. The converse is the
bounded form of the classical Itzkowitz problem, and was singled out by
Bouziad and Troallic as their main problem.

\begin{question}\cite[Question~2]{BouziadTroallic2007}
\label{q:itzkowitz}
Is $[\FSIN]=[\SIN]$?
\end{question}

A substantial part of the literature has been devoted to
Question~\ref{q:itzkowitz} under additional hypotheses. The locally compact
case was obtained independently by Itzkowitz, Milnes, and Protasov;
Protasov proved the result more generally for almost metrizable groups,
while Megrelishvili, Nickolas, and Pestov treated locally connected groups.
Bouziad and Troallic later developed the approach through thin subsets and
almost balanced groups; in particular, locally precompact FSIN groups are
SIN. See
\cite{Itzkowitz1991,Milnes1990,Protasov1991,
MegrelishviliNickolasPestov1997,BouziadTroallic2004,
BouziadTroallic2006,BouziadTroallic2007} and the references therein.
Nevertheless, Question~\ref{q:itzkowitz} remains open in full generality.

Among the concrete problems proposed by Bouziad and Troallic, two are
especially closely related to our main result. The first asks whether
functional balance is already sufficient to eliminate the asymmetry between
left and right precompactness. Recall that a subset $P\subseteq G$ is left
precompact if, for every $V\in\N_G(\e)$, there is a finite $F\subseteq G$
with $P\subseteq FV$; right precompactness is defined analogously using
covers $P\subseteq VF$.

\begin{question}\cite[Question~8]{BouziadTroallic2007}
\label{q:precompact}
Let $G$ be a functionally balanced group. Is every left precompact subset of
$G$ right precompact?
\end{question}

A positive answer to Question~\ref{q:itzkowitz} would immediately imply a
positive answer to Question~\ref{q:precompact}, because the two notions of
precompactness agree in a SIN group. Question~\ref{q:precompact} is,
however, a statement about individual subsets and is therefore potentially
weaker than the full equality of the left and right uniformities.

The second problem restricts Question~\ref{q:itzkowitz} to
$\aleph_0$-bounded groups.

\begin{question}\cite[Question~12]{BouziadTroallic2007}
\label{q:omega}
Let $G$ be a member of $[\FSIN]$ which is $\aleph_0$-bounded. Is $G$
balanced?
\end{question}

We use the now standard term \emph{$\omega$-narrow} for
$\aleph_0$-bounded. Thus $G$ is $\omega$-narrow if for every
$V\in\N_G(\e)$ there is a countable set $C\subseteq G$ such that $G=CV$.
By Guran's theorem, these are precisely the topological subgroups of
products of second-countable topological groups \cite{Guran1981}.
Consequently, this class contains all separable, Lindel\"of, and
$\sigma$-compact topological groups. Thus Question~\ref{q:omega} is a broad
and product-stable special case of Question~\ref{q:itzkowitz}.

The connection with uniformly discrete subsets is provided by a criterion
of Hern\'andez, rooted in earlier work of Itzkowitz: a topological group is
SIN if and only if every left uniformly discrete subset is right thin
\cite{Hernandez2000}. Here $A\subseteq G$ is left uniformly discrete if the
sets $aO$, $a\in A$, are pairwise disjoint for some identity neighbourhood
$O$, and it is right thin if for every $V\in\N_G(\e)$ there is
$U\in\N_G(\e)$ such that $a^{-1}Ua\subseteq V$ for all $a\in A$. Our main
result establishes this thinness for every countable left uniformly
discrete subset of an FSIN group.

\begin{theorem}\label{thm:countable-thin}
Let $G$ be a Hausdorff FSIN group. Then every countable left uniformly
 discrete subset $A\subseteq G$ is right thin. Explicitly, for every
$V\in\N_G(\e)$ there is $U\in\N_G(\e)$ such that
\begin{equation}\label{eq:thin-intro}
                         a^{-1}Ua\subseteq V
                         \qquad(a\in A).
\end{equation}
\end{theorem}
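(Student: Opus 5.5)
Assume the statement fails and build a single bounded left uniformly continuous function on $G$ that is not right uniformly continuous. This contradicts the FSIN hypothesis.

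\textbf{Setup.} Suppose $A=\{a_1,a_2,\dots\}$ is left uniformly discrete, witnessed by $O\in\N_G(\e)$: the sets $a_nO$ are pairwise disjoint. Suppose $A$ is not right thin at some $V\in\N_G(\e)$. Shrink $O$ to a symmetric $W$ with $W^{4}\subseteq O$. Fix a countable decreasing sequence $U_1\supseteq U_2\supseteq\cdots$ of symmetric identity neighbourhoods, with $U_{k+1}^2\subseteq U_k$ and $U_1\subseteq W$. Since $A$ is not right thin, for each $k$ there are $a\in A$ and $u\in U_k$ with $a^{-1}ua\notin V$.

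\textbf{Step 1: choose the witnesses.} Select inductively indices $n_1<n_2<\cdots$ and $u_k\in U_k$ with $a_{n_k}^{-1}u_ka_{n_k}\notin V$. Countability of $A$ matters here. The failure of thinness cannot be confined to finitely many $a$, because any finite set is trivially right thin. So after removing finitely many points the remaining set is still not right thin at $V$, and the diagonal choice goes through with the indices strictly increasing.

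\textbf{Step 2: build the function.} Take a left uniformly continuous $\varphi\colon G\to[0,1]$ with $\varphi(\e)=1$ and $\varphi=0$ outside $V_0$, where $V_0$ is symmetric with $V_0^2\subseteq V\cap W$. Such a $\varphi$ exists because the left uniformity is generated by bounded pseudometrics. Define
\[
f(x)=\sum_k \varphi(a_{n_k}^{-1}x).
\]
The $k$-th term is supported in $a_{n_k}V_0\subseteq a_{n_k}O$, and these sets are pairwise disjoint. Hence at most one term is nonzero at each point and $0\le f\le 1$.

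For left uniform continuity, note that if $x^{-1}y\in V_1$ then $(a^{-1}x)^{-1}(a^{-1}y)=x^{-1}y$. So each term is left uniformly continuous with the same modulus. The obstacle is a pair $x,y$ lying in different pieces. This is handled by the disjointness margin: replace $\varphi$ by a function supported in $V_0$ and take the modulus neighbourhood $V_1\subseteq W$. Then $x\in a_{n_k}V_0$ and $x^{-1}y\in W$ force $y\in a_{n_k}O$, so $y$ lies in no other piece. Hence $|f(x)-f(y)|=|\varphi(a^{-1}x)-\varphi(a^{-1}y)|$ with a single $a$, and $f$ is bounded and left uniformly continuous.

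\textbf{Step 3: show $f$ is not right uniformly continuous.} Put $x_k=a_{n_k}$ and $y_k=u_k a_{n_k}$. Then $y_kx_k^{-1}=u_k\in U_k\to\e$, while $f(x_k)=1$ and $f(y_k)=\varphi(a_{n_k}^{-1}u_ka_{n_k})$. This need not be small: $\varphi$ only vanishes outside $V_0$, and $a^{-1}ua\notin V$ does not place $a^{-1}ua$ outside $V_0$ in a controlled way unless $\varphi\le\tfrac12$ off $V_0$.

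This is the main obstacle. Right uniform continuity only sees the sequence $u_k\to\e$, so the countable sequence $(U_k)$ may not form a base, and the group need not be first countable. I would handle it in two parts.

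\begin{enumerate}
\item Choose $\varphi$ with $\varphi(\e)=1$ and $\varphi\le\tfrac12$ off $V_0$, and arrange $V_0\subseteq V$. Then $|f(y_k)-f(x_k)|\ge\tfrac12$ whenever $y_k\in a_{n_k}O$.
\item Argue by contradiction. Right uniform continuity of $f$ gives a single $U$ with $|f(uy)-f(y)|<\tfrac12$ for all $u\in U$ and all $y$. Apply this to $y=a$ and $u\in U\cap W$, so that $ua\in aO$ for the relevant $a$ (using a preliminary shrinking of $U$). Then $\varphi(a^{-1}ua)>\tfrac12$, which forces $a^{-1}ua\in V_0\subseteq V$ for all $a$ in the chosen subsequence.
\end{enumerate}

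So the cleaner route is to skip the sequence $(U_k)$ entirely. Let $f=\sum_{a\in A}\varphi(a^{-1}x)$ over all of $A$; this is well defined by disjointness. Right uniform continuity of $f$ then directly yields $U$ with $a^{-1}Ua\subseteq V$ for all $a\in A$.

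The remaining delicate point is the cross-piece estimate in Step 2. One must make sure that $ua$ stays inside $aO$ so that only the $a$-term is evaluated. If $a^{-1}ua$ leaves $O$, then $f(ua)$ might pick up a different term. I would resolve this by testing with $u\in U$ and bounding, using $\varphi\le\tfrac12$ off $V_0$ together with the disjointness of the pieces. Countability of $A$ would be used to guarantee the left uniform continuity of the infinite sum, for instance by weighting or via a countable chain of neighbourhoods.
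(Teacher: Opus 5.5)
The gap is at the point you flag as delicate, and it is not a technicality. With $f=\sum_{a\in A}\varphi(a^{-1}x)$, right uniform continuity gives $U$ with $|f(ua)-f(a)|<\tfrac12$, hence $f(ua)>\tfrac12$, for all $a\in A$ and $u\in U$. But this only says $ua\in bV_0$ for \emph{some} $b\in A$, whereas $a^{-1}ua\in V$ requires $b=a$.

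Your remedy, a ``preliminary shrinking of $U$'' so that $ua\in aO$, is circular. Having $ua\in aO$ for all $a\in A$ and $u\in U$ is the statement $a^{-1}Ua\subseteq O$, i.e.\ right thinness itself. Disjointness of the pieces $aO$ does not say which piece $ua$ lands in. Concretely, nothing in your argument excludes points $a\neq b$ of $A$ with $ba^{-1}\in U$. For $u=ba^{-1}$ you get $f(ua)=f(b)=1=f(a)$, yet $a^{-1}ua=a^{-1}b\notin O$. Since $f\equiv 1$ on $A$, no estimate on $f$ can detect this.

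In effect your $f$ is Criterion~\ref{crit:ps} applied to $C=A$. It gives $UA\subseteq AV_0$, so each $u\in U$ induces a permutation of $A$, but nothing forces that permutation to be the identity. Also, your left uniform continuity argument works for arbitrary $A$, since the disjointness margin does all the work. So your final argument never uses countability. If it were valid, Criterion~\ref{crit:thin} would make every FSIN group SIN, settling Question~\ref{q:itzkowitz}, which the paper describes as open.

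The missing idea is a labelling of the pieces that a small translation must respect exactly. The paper proceeds as follows.
\begin{enumerate}
\item It uses Theorem~\ref{thm:local-precompact} to assume $G$ is not locally precompact. Then, by Lemma~\ref{lem:separated}, the small symmetric $W$ with $W^3\subseteq O$ contains an infinite set $D$ whose translates $dQ$ are pairwise disjoint.
\item It replaces each $a_n$ by the cluster $C_n=a_nD_n$, where the $D_n\subseteq D$ are pairwise disjoint with $|D_n|=n+2$.
\item It applies Criterion~\ref{crit:ps} to $C=\bigcup_n C_n$, obtaining a symmetric $U$ with $UC\subseteq CQ$.
\end{enumerate}
As in your situation, each $u\in U$ induces a permutation $\phi_u$ of $C$ with $\phi_u^{-1}=\phi_{u^{-1}}$. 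Now, however, $\phi_u$ sends a whole cluster $C_n$ into a single cluster $C_m$, because $ua_n\in a_mW^3\subseteq a_mO$ and the sets $a_mO$ are disjoint. The count $n+2\le m+2$, applied to $u$ and to $u^{-1}$, forces $m=n$. Hence $ua_nd\in a_nD_nQ$, and so $a_n^{-1}ua_n\in D_nQd^{-1}\subseteq W^3\subseteq V$.

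Countability enters exactly in assigning distinct finite cluster sizes to the points of $A$. Weighting the terms of your $f$ by distinct constants would not do the same job, since infinitely many values in a bounded interval admit no uniform gap. The paper instead encodes the label $n$ in a cardinality, which a bijection preserves exactly.
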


Theorem~\ref{thm:countable-thin} yields affirmative answers to both
Questions~\ref{q:precompact} and~\ref{q:omega}. More precisely, we prove
that in every Hausdorff FSIN group, every left precompact subset is right
precompact; hence left and right precompact subsets coincide. We also prove
that every Hausdorff $\omega$-narrow FSIN group is SIN. In particular, the
latter applies to FSIN groups that are separable, Lindel\"of,
$\sigma$-compact, or countable. These two consequences are stated and
proved in Section~\ref{sec:applications}. The proof of
Theorem~\ref{thm:countable-thin} is postponed to the final section.

\subsection{Uniformity criteria and terminology}

All topological groups in the paper are Hausdorff.  We denote the identity
of a group by $\e$ and its identity-neighbourhood filter by $\N_G(\e)$.

\begin{definition}\label{def:lud-thin}
A subset $A\subseteq G$ is \emph{left uniformly discrete} if there is
$O\in\N_G(\e)$ such that the sets $aO$, $a\in A$, are pairwise disjoint.
It is \emph{right uniformly discrete} if there is $O\in\N_G(\e)$ such
that the sets $Oa$, $a\in A$, are pairwise disjoint.  It is \emph{right
thin} if for every $V\in\N_G(\e)$ there is $U\in\N_G(\e)$ such that
$a^{-1}Ua\subseteq V$ for all $a\in A$.
\end{definition}

The last condition is equivalent to saying that
$\bigcap_{a\in A}aVa^{-1}$ is an identity neighbourhood for every
$V\in\N_G(\e)$.  We use the explicit conjugation formula because the words
\emph{left thin} and \emph{right thin} are interchanged in parts of the
literature.

A subset $P\subseteq G$ is \emph{left precompact} if for every
$V\in\N_G(\e)$ there is a finite $F\subseteq G$ with $P\subseteq FV$;
it is \emph{right precompact} if for every $V\in\N_G(\e)$ there is a
finite $F\subseteq G$ with $P\subseteq VF$.  Inversion interchanges these
two notions.  We call a subset \emph{precompact} if it is both left and
right precompact.  The two one-sided notions need not agree in general,
but they agree for symmetric subsets and for all subsets of a SIN group.
A group is \emph{locally precompact} if it has a precompact identity
neighbourhood.

We shall use the following three known results.

\begin{criterion}[Protasov--Saryev]\label{crit:ps}
A topological group $G$ is FSIN if and only if, for every $C\subseteq G$
and every $T\in\N_G(\e)$, there is $U\in\N_G(\e)$ such that
$UC\subseteq CT$.
\end{criterion}

The criterion is due to Protasov and Saryev
\cite{ProtasovSaryev1988,Protasov1991}; the above orientation is stated
explicitly in \cite[Theorem~4.1]{MbomboPestov2012}.  Some sources use the
equivalent form $CU\subseteq TC$.  Applying that form to $C^{-1}$ and
$T^{-1}$ and then taking inverses gives the version above.

\begin{criterion}[Hern\'andez--Itzkowitz]\label{crit:thin}
A topological group $G$ is SIN if and only if every left uniformly
discrete subset of $G$ is right thin.
\end{criterion}

This is the equivalence recorded in \cite[Lemma~2]{Hernandez2000}; that
paper also notes the earlier role of Itzkowitz.  The criterion carries no
countability or metrizability hypothesis.

\begin{theorem}[Bouziad--Troallic]\label{thm:local-precompact}
Every locally precompact FSIN group is SIN.
\end{theorem}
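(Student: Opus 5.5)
The plan is to pass to the Raikov completion and reduce to the locally compact case of Question~\ref{q:itzkowitz}. That case was settled by Itzkowitz, Milnes and Protasov, as recalled in the introduction. Let $G$ be locally precompact and FSIN, and let $\hat G$ be its Raikov completion, i.e.\ its completion with respect to the two-sided uniformity. Because $G$ has a precompact identity neighbourhood $W$, the closure $\overline{W}$ in $\hat G$ is complete and totally bounded, hence compact. So $\hat G$ is a locally compact Hausdorff group containing $G$ as a dense subgroup. It will suffice to prove that $\hat G$ is FSIN. Then $\hat G$ is SIN by the locally compact case, and $G$ is SIN because a subgroup of a SIN group is SIN: intersect invariant identity neighbourhoods of $\hat G$ with $G$.

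To show that $\hat G$ is FSIN, take a bounded left uniformly continuous $f\colon\hat G\to\mathbb R$. Its restriction $f|_G$ is bounded and left uniformly continuous on $G$, since the left uniformity of $G$ is the trace of that of $\hat G$. By the FSIN property of $G$, $f|_G$ is right uniformly continuous. Next I would check that right uniform continuity passes from $G$ to $\hat G$. Fix $\varepsilon>0$ and choose an open identity neighbourhood $V$ of $\hat G$ such that $|f(x)-f(y)|<\varepsilon$ whenever $x,y\in G$ and $xy^{-1}\in V$. Now let $x,y\in\hat G$ with $xy^{-1}\in V$. By density of $G$ and continuity of the group operations, there are nets $x_i\to x$ and $y_i\to y$ in $G$ with $x_iy_i^{-1}\in V$ eventually, because $V$ is open. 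Since $f$ is continuous on $\hat G$, it follows that $|f(x)-f(y)|\le\varepsilon$. Hence $f$ is right uniformly continuous on $\hat G$, and so $\hat G$ is FSIN.

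The main obstacle is the bookkeeping in the density and approximation step. The net argument has to use an \emph{open} neighbourhood $V$, and it needs the continuity of $f$ on $\hat G$, which holds because $f$ is left uniformly continuous there. A second point to confirm is that local precompactness really yields local compactness of the Raikov completion; this requires that $W$ is precompact in the two-sided sense, which is exactly the definition of \emph{precompact} adopted above. If one prefers to avoid appealing to the locally compact theorem, an alternative route runs inside $G$ itself. It would combine Criterion~\ref{crit:ps}, applied to sets $C=AW$, with Criterion~\ref{crit:thin}, using precompactness of $W$ to reduce the conjugation estimate to finitely many translates. I expect the completion argument to be shorter and more robust, so I would pursue it first.
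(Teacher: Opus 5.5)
Your argument is correct, and it takes a genuinely different route from the paper. The paper does not prove this statement; it imports it as Corollary~3.7 of Bouziad--Troallic (2004), which its introduction places within their theory of thin subsets and almost balanced groups.

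You instead show that the locally precompact case follows formally from the locally compact theorem of Itzkowitz, Milnes and Protasov, in three steps:
\begin{itemize}
\item FSIN ascends from the dense subgroup $G$ to its Raikov completion $\hat G$. Your net argument, using an \emph{open} $V$ and the continuity of $f$ on $\hat G$, is exactly what is needed here.
\item $\hat G$ is locally compact.
\item SIN descends to subgroups.
\end{itemize}
This gives a short reduction using only standard completion facts. It also shows that the locally precompact statement has no more content than the locally compact one. The cost is that the locally compact theorem stays a black box, whereas the cited result belongs to the same thin-set circle of ideas as the paper's own main proof.

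Two routine points deserve a line each in a written version. First, $\overline W$ is an identity neighbourhood of $\hat G$: $W\supseteq G\cap O$ for some open $O\ni e$ in $\hat G$, and density gives $O\subseteq\overline{G\cap O}$. Second, being both left and right precompact implies total boundedness for the two-sided uniformity. Indeed, $W\subseteq F_1V$ and $W\subseteq VF_2$ give the finite cover $\{fV\cap Vg: f\in F_1,\ g\in F_2\}$, whose members are small for both one-sided uniformities.

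Your closing ``alternative route'' is only a gesture and can be omitted.
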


This is \cite[Corollary~3.7]{BouziadTroallic2004}.  In particular, it
contains the locally compact case.

\section{Applications of Theorem \ref{thm:countable-thin}}\label{sec:applications}

We first answer Question~\ref{q:precompact}.

\begin{theorem}\label{thm:q8}
Let $G$ be a Hausdorff FSIN group. Every left precompact subset of $G$ is
right precompact. Consequently, left and right precompact subsets of $G$
coincide.
\end{theorem}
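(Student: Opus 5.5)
The plan is to argue by contradiction, turning a failure of right precompactness into a countable set to which Theorem~\ref{thm:countable-thin} applies.

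\textbf{Step 1: extract a right uniformly discrete sequence.} Suppose $P\subseteq G$ is left precompact but not right precompact. Then there is $V\in\N_G(\e)$ such that $P\not\subseteq VF$ for every finite $F\subseteq G$. Choose a symmetric $W\in\N_G(\e)$ with $W^2\subseteq V$. Inductively pick $a_n\in P$ with
\[
a_n\notin \bigcup_{m<n} V a_m ,
\]
which is possible because each set $\{a_m:m<n\}$ is finite. Then $a_na_m^{-1}\notin V$ for all $m<n$, and hence also for $m>n$ by symmetry of $V$. It follows that the sets $Wa_n$ are pairwise disjoint. Put $b_n=a_n^{-1}$. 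Then the sets $b_nW^{-1}=(Wa_n)^{-1}$ are pairwise disjoint, so $B=\{b_n:n\in\omega\}$ is a countable left uniformly discrete subset of $G$.

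\textbf{Step 2: apply thinness.} By Theorem~\ref{thm:countable-thin} applied to $B$ and the neighbourhood $W$, there is $U\in\N_G(\e)$ with $b_n^{-1}Ub_n\subseteq W$ for all $n$. Written in terms of the $a_n$, this says
\[
a_nUa_n^{-1}\subseteq W \qquad (n\in\omega).
\]

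\textbf{Step 3: use left precompactness.} Take a symmetric $U_0\in\N_G(\e)$ with $U_0^2\subseteq U$, and a finite $F$ with $P\subseteq FU_0$. By pigeonhole there are $n\neq m$ and a single $x\in F$ with $a_n=xu_n$ and $a_m=xu_m$, where $u_n,u_m\in U_0$. Then $a_n^{-1}a_m=u_n^{-1}u_m\in U$. Consequently
\[
a_na_m^{-1}=a_n\,(a_m^{-1}a_n)\,a_n^{-1}\in a_nUa_n^{-1}\subseteq W\subseteq V,
\]
because $a_m^{-1}a_n=(a_n^{-1}a_m)^{-1}\in U^{-1}=U$ (taking $U$ symmetric, which we may). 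This contradicts Step~1.

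\textbf{Step 4: the ``consequently'' clause.} If $P$ is right precompact, then $P^{-1}$ is left precompact, since inversion interchanges the two notions. By the first part $P^{-1}$ is right precompact, so $P$ is left precompact. Thus left and right precompact subsets of $G$ coincide.

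\textbf{Main obstacle.} The only delicate point is the order of quantifiers. $U$ must be obtained from thinness before $U_0$ and $F$ are chosen from left precompactness. Also, the contradiction must be obtained through the conjugation by $a_n$ (the quantity that thinness controls), and not through conjugation by the arbitrary element $x\in F$.
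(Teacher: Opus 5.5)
Your proof is correct and follows the paper's argument essentially step for step: extract a right uniformly discrete sequence from $P$, invert it, apply Theorem~\ref{thm:countable-thin}, then use left precompactness and pigeonhole. The only cosmetic difference is that you finish with the direct computation $a_na_m^{-1}\in a_nUa_n^{-1}\subseteq W$, whereas the paper first deduces that the translates $dU$ are pairwise disjoint. One small slip: in Step~1 you invoke symmetry of $V$ without having chosen $V$ symmetric. This is harmless, since you may shrink $V$ to a symmetric neighbourhood, or simply note that $W=W^{-1}$ puts both $a_na_m^{-1}$ and $a_ma_n^{-1}$ into $W\subseteq V$.
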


\begin{proof}
Let $P\subseteq G$ be left precompact and suppose that $P$ is not right
precompact.  Then there is $W\in\N_G(\e)$ such that
$P\not\subseteq WF$ for every finite $F\subseteq G$.  Choose a symmetric
$V\in\N_G(\e)$ with $V^2\subseteq W$.  Recursively choose
$d_n\in P\setminus\bigcup_{i<n}V^2d_i$.  This is possible because
$\bigcup_{i<n}V^2d_i\subseteq W\{d_i:i<n\}$.  If $i<j$ and
$Vd_i\cap Vd_j\neq\varnothing$, then $d_j\in V^2d_i$, a contradiction.
Thus $D=\{d_n:n\in\omega\}$ is right uniformly discrete, and hence
$D^{-1}$ is countable and left uniformly discrete.

By Theorem~\ref{thm:countable-thin}, $D^{-1}$ is right thin.  Applying
thinness with the target neighbourhood $V$, and shrinking if necessary,
choose a symmetric $U\in\N_G(\e)$ such that $dUd^{-1}\subseteq V$ for
every $d\in D$.  Hence $dU\subseteq Vd$ for all $d\in D$, so the sets
$dU$, $d\in D$, are pairwise disjoint.

Choose a symmetric $R\in\N_G(\e)$ with $R\subseteq U$.  Since $P$ is left
precompact, there is a finite $F\subseteq G$ such that $P\subseteq FR$.
Two distinct points $d_i,d_j\in D$ must lie in the same translate $fR$.
Writing $d_k=fr_k$ with $r_k\in R$ for $k=i,j$, we obtain
$f=d_kr_k^{-1}\in d_kR\subseteq d_kU$ for both $k=i$ and $k=j$, contrary
to the pairwise disjointness of the sets $dU$.  Therefore $P$ is right
precompact.

Conversely, if $P$ is right precompact, then $P^{-1}$ is left precompact,
so by what we have just proved $P^{-1}$ is right precompact.  Taking
inverses shows that $P$ is left precompact.  Thus the two notions coincide.
\end{proof}

For Question~\ref{q:omega} we need only the following elementary observation.

\begin{lemma}\label{lem:omega-lud}
Every left uniformly discrete subset of an $\omega$-narrow topological
group is countable.
\end{lemma}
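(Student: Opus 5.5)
The plan is a direct covering argument. Let $A\subseteq G$ be left uniformly discrete, witnessed by $O\in\N_G(\e)$, so the sets $aO$, $a\in A$, are pairwise disjoint. Choose a symmetric $V\in\N_G(\e)$ with $V^2\subseteq O$. Since $G$ is $\omega$-narrow, there is a countable $C\subseteq G$ with $G=CV$. The claim is that each translate $cV$, $c\in C$, contains at most one point of $A$. Once this is established, $A$ injects into $C$, via $a\mapsto$ some $c$ with $a\in cV$, and so $A$ is countable.

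To verify the claim, suppose $a,b\in A$ both lie in $cV$, say $a=cv_1$ and $b=cv_2$ with $v_1,v_2\in V$. Then
\[
c=av_1^{-1}\in aV,
\qquad
c=bv_2^{-1}\in bV,
\]
using that $V$ is symmetric. Hence $c\in aV\cap bV\subseteq aO\cap bO$, and pairwise disjointness forces $a=b$.

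The only point needing care is the side on which things act. The definition of $\omega$-narrow gives covers $G=CV$ with the neighbourhood on the right, which matches the left translates $aO$ in the definition of left uniform discreteness. So no inversion is required, and I expect no real obstacle. The axiom of choice is used only in selecting $c$ for each $a$; alternatively, one can note that $A\subseteq\bigcup_{c\in C}(cV\cap A)$ is a countable union of sets with at most one element.
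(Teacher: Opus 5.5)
Your proof is correct and is essentially the paper's argument: cover $G$ by countably many left translates of a small symmetric identity neighbourhood and show that each translate contains at most one point of $A$. The paper phrases the key step as $a^{-1}b\in S^{-1}S\subseteq RR^{-1}$, which is the same as your observation that $c\in aV\cap bV$; only $V\subseteq O$ is actually used, so requiring $V^2\subseteq O$ is harmless but more than you need.
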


\begin{proof}
Let $A\subseteq G$ be left uniformly discrete, and choose
$R\in\N_G(\e)$ such that the sets $aR$, $a\in A$, are pairwise disjoint.
Choose a symmetric $S\in\N_G(\e)$ with $S^{-1}S\subseteq RR^{-1}$.  By
$\omega$-narrowness, $G=FS$ for some countable $F\subseteq G$.  Each
translate $fS$ meets $A$ in at most one point: if distinct
$a,b\in A\cap fS$, then $a^{-1}b\in S^{-1}S\subseteq RR^{-1}$, which is
equivalent to $aR\cap bR\neq\varnothing$.  Hence $A$ is countable.
\end{proof}

\begin{theorem}\label{thm:q12}
Every Hausdorff $\omega$-narrow FSIN topological group is SIN.
\end{theorem}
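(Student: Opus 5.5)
The plan is to combine \cref{lem:omega-lud}, \cref{thm:countable-thin} and \cref{crit:thin}. Let $G$ be a Hausdorff $\omega$-narrow FSIN group. By \cref{crit:thin}, it suffices to show that every left uniformly discrete subset $A\subseteq G$ is right thin. This is a condition on individual subsets, and it carries no countability hypothesis. So our only task is to reduce from arbitrary left uniformly discrete sets to countable ones.

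First fix a left uniformly discrete $A\subseteq G$. Since $G$ is $\omega$-narrow, \cref{lem:omega-lud} shows that $A$ is countable. Then \cref{thm:countable-thin} applies directly, because $G$ is Hausdorff and FSIN. For every $V\in\N_G(\e)$ it gives $U\in\N_G(\e)$ with $a^{-1}Ua\subseteq V$ for all $a\in A$. Hence $A$ is right thin. Since $A$ was arbitrary, \cref{crit:thin} yields that $G$ is SIN.

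There is essentially no obstacle here. All the depth lies in \cref{thm:countable-thin}, whose proof is deferred. The only point needing care is consistency of conventions. We must check that the notion of right thinness in \cref{crit:thin}, the conjugation $a^{-1}Ua\subseteq V$, matches the one produced by \cref{thm:countable-thin}. We must also check that ``left uniformly discrete'' is meant with the same sidedness in both places. The paper fixes both of these explicitly in \cref{def:lud-thin}, so the proof is a direct chaining of the three results.
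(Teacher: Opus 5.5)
Your proposal is correct and is exactly the paper's proof: \cref{lem:omega-lud} makes every left uniformly discrete subset countable, \cref{thm:countable-thin} then makes it right thin, and \cref{crit:thin} concludes that $G$ is SIN. Your check that the sidedness conventions agree is also accurate, since \cref{def:lud-thin} fixes them uniformly for all three results.
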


\begin{proof}
Let $G$ be Hausdorff, $\omega$-narrow, and FSIN.  By
Lemma~\ref{lem:omega-lud}, every left uniformly discrete subset of $G$ is
countable, and by Theorem~\ref{thm:countable-thin} every such subset is
right thin.  Criterion~\ref{crit:thin} therefore implies that $G$ is SIN.
\end{proof}

The product description of $\omega$-narrow groups and several familiar
covering properties give immediate special cases.

\begin{corollary}\label{cor:scope}
Each of the following Hausdorff FSIN groups is SIN:
\begin{enumerate}[label=\textup{(\roman*)}]
\item every topological subgroup of a product of second-countable groups;
\item every separable topological group;
\item every Lindel\"of topological group;
\item every $\sigma$-compact topological group;
\item every countable topological group.
\end{enumerate}
\end{corollary}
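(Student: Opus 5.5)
The plan is to reduce every item to Theorem~\ref{thm:q12}. Since each listed group is assumed Hausdorff and FSIN, it suffices to check that each class consists of $\omega$-narrow groups. Once $G$ is $\omega$-narrow, Theorem~\ref{thm:q12} (which rests on Lemma~\ref{lem:omega-lud} and Theorem~\ref{thm:countable-thin}) gives that $G$ is SIN.

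\textbf{Items (v), (ii), (iii), (iv).} These are elementary, and I will check them first.
\begin{enumerate}[label=\textup{(\alph*)}]
\item For (v), a countable group satisfies $G=GV$ with $C=G$ countable.
\item For (ii), let $D$ be a countable dense set and let $V\in\N_G(\e)$. For $g\in G$, the set $gV^{-1}$ is open and nonempty, so it contains some $d\in D$. Then $d=gv^{-1}$ with $v\in V$, hence $g=dv\in DV$, and so $G=DV$.
\item For (iii), the open cover $\{gV:g\in G\}$ has a countable subcover $\{cV:c\in C\}$, so $G=CV$.
\item For (iv), every $\sigma$-compact space is Lindel\"of, so (iv) follows from (iii).
\end{enumerate}

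\textbf{Item (i).} This is the only item needing slightly more care. I could simply invoke Guran's theorem in the easy direction, but I would rather verify it directly in two steps.

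First, products of second-countable groups are $\omega$-narrow. A second-countable group is Lindel\"of, hence $\omega$-narrow by (iii). Let $\prod_i G_i$ be the product and $V$ an identity neighbourhood in it. Then $V$ contains a basic set $\prod_{i\in F}V_i\times\prod_{i\notin F}G_i$ with $F$ finite. Choose countable $C_i\subseteq G_i$ with $G_i=C_iV_i$ for $i\in F$, and put the identity in the remaining coordinates. The resulting countable set $C$ satisfies $\prod_i G_i=CV$.

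Second, $\omega$-narrowness passes to subgroups. Let $H\le G$ and let $V\cap H$ be a neighbourhood of $\e$ in $H$, with $V\in\N_G(\e)$. Choose a symmetric $W\in\N_G(\e)$ with $W^2\subseteq V$, and a countable $C\subseteq G$ with $G=CW$. For each $c\in C$ with $cW\cap H\neq\varnothing$, pick $h_c\in cW\cap H$.

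Now let $x\in H$, and choose $c\in C$ with $x\in cW$. Then
\[
h_c^{-1}x\in W^{-1}W=W^2\subseteq V,
\]
and $h_c^{-1}x\in H$. Hence $x\in h_c(V\cap H)$, so $H$ is covered by countably many translates of $V\cap H$.

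Combining the two steps, every subgroup of a product of second-countable groups is $\omega$-narrow, which gives (i).

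\textbf{Main obstacle.} No step is genuinely hard. The only point requiring attention is the subgroup step, where one must pick the representatives $h_c$ inside $H$ rather than using the points of $C$ themselves.
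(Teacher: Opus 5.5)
Your proposal is correct and follows essentially the paper's route: each item is reduced to Theorem~\ref{thm:q12} by checking $\omega$-narrowness, and the only real difference is that for (i) you verify the easy direction of Guran's theorem directly (first products, then subgroups) rather than citing \cite{Guran1981}. One minor slip needs fixing: an identity neighbourhood $V$ need not be open, so in (ii) and (iii) the sets $gV^{-1}$ and $gV$ need not be open; either pass to an open $W\subseteq V$ as the paper does, or observe that $gV^{-1}$ is still a neighbourhood of $g$ and that the interiors of the sets $gV$ still cover $G$.
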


\begin{proof}
Item~\textup{(i)} is Guran's characterization \cite{Guran1981}.  If $G$
has a countable dense subset $D$ and $V\in\N_G(\e)$, choose a symmetric
open identity neighbourhood $W\subseteq V$.  Every translate $gW$ meets
$D$, and therefore $G=DW\subseteq DV$; hence every separable group is
$\omega$-narrow.  If $G$ is Lindel\"of, choose an open identity
neighbourhood $W\subseteq V$.  The cover $\{gW:g\in G\}$ has a countable
subcover, so $G=FW\subseteq FV$ for some countable $F\subseteq G$.  A
$\sigma$-compact space is Lindel\"of, and a countable group is trivially
$\omega$-narrow.  The conclusion follows from Theorem~\ref{thm:q12}.
\end{proof}

\begin{corollary}\label{cor:counterexample-boundary}
If a Hausdorff FSIN group is not SIN, then it is not $\omega$-narrow and
it contains an uncountable left uniformly discrete subset which is not
right thin.
\end{corollary}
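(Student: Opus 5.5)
The corollary follows by combining Theorem~\ref{thm:q12}, Theorem~\ref{thm:countable-thin} and Criterion~\ref{crit:thin}. Let $G$ be a Hausdorff FSIN group that is not SIN.

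First, $G$ is not $\omega$-narrow. If it were, Theorem~\ref{thm:q12} would make $G$ a SIN group, contrary to the hypothesis.

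Second, we produce the required subset. Since $G$ is not SIN, Criterion~\ref{crit:thin} gives a left uniformly discrete subset $A\subseteq G$ that is not right thin. Suppose $A$ were countable. Then Theorem~\ref{thm:countable-thin}, which applies because $G$ is Hausdorff and FSIN, would make $A$ right thin, a contradiction. Hence $A$ is uncountable, and it is the desired uncountable left uniformly discrete subset that is not right thin.

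Nothing here is delicate. The only point to check is that Criterion~\ref{crit:thin} is used in its nontrivial direction, namely that failure of SIN yields a left uniformly discrete set which is not right thin. This is just the contrapositive of the implication ``every left uniformly discrete subset is right thin $\Rightarrow$ SIN'' contained in that criterion. One may also note that Lemma~\ref{lem:omega-lud} gives the first assertion independently: an uncountable left uniformly discrete subset cannot exist in an $\omega$-narrow group, so the existence of $A$ already shows that $G$ is not $\omega$-narrow.
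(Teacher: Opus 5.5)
Your proposal is correct and matches the paper's proof: the first assertion is the contrapositive of Theorem~\ref{thm:q12}, and the second combines Criterion~\ref{crit:thin} with Theorem~\ref{thm:countable-thin} to force the non-right-thin left uniformly discrete set to be uncountable. Your closing remark is also valid: Lemma~\ref{lem:omega-lud} applied to that uncountable set gives non-$\omega$-narrowness directly.
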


\begin{proof}
The first assertion is the contrapositive of Theorem~\ref{thm:q12}.  By
Criterion~\ref{crit:thin}, a non-SIN group has a left uniformly discrete
subset which is not right thin.  Such a subset cannot be countable by
Theorem~\ref{thm:countable-thin}.
\end{proof}

\section{Proof of Theorem \ref{thm:countable-thin}}

We first isolate the elementary consequence of non-precompactness used in
the proof.  The direction of the translates in this lemma is important.

\begin{lemma}\label{lem:separated}
Let $N=N^{-1}$ be an identity neighbourhood in a topological group. If
$N$ is not precompact, then there exist a countably infinite set
$D\subseteq N$ and a symmetric identity neighbourhood $Q$ such that the
sets $dQ$, $d\in D$, are pairwise disjoint.
\end{lemma}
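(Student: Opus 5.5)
Since $N$ is symmetric, left and right precompactness of $N$ coincide (inversion maps $N$ to itself and interchanges $FV$ with $VF^{-1}$). So "$N$ is not precompact" means $N$ is not left precompact. Hence there is $W\in\N(\e)$ with $N\not\subseteq FW$ for every finite $F\subseteq G$. The plan is to run the same recursive construction as in the proof of Theorem~\ref{thm:q8}, but with translates on the other side.

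First choose a symmetric $Q\in\N(\e)$ with $Q^2\subseteq W$. Next pick points recursively: $d_0\in N$ arbitrary, and
\[
d_n\in N\setminus\bigcup_{i<n} d_iQ^2 .
\]
This is possible because $\bigcup_{i<n}d_iQ^2\subseteq\{d_i:i<n\}W$, which does not cover $N$.

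Now check disjointness. Suppose $i<j$ and $d_iQ\cap d_jQ\neq\varnothing$. Then $d_iq=d_jq'$ for some $q,q'\in Q$, so $d_j=d_iqq'^{-1}\in d_iQQ^{-1}=d_iQ^2$, using symmetry of $Q$. This contradicts the choice of $d_j$. So the sets $d_nQ$ are pairwise disjoint. In particular the $d_n$ are distinct, and $D=\{d_n:n\in\omega\}$ is a countably infinite subset of $N$.

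The only delicate point is the direction of the translates. The recursion must use left translates $d_iQ^2$ so that non-coverage by $FW$, the left-precompactness failure, applies directly. This is exactly where symmetry of $N$ is used: it removes any need to decide which one-sided notion of precompactness fails. Everything else is routine.
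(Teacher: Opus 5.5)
Your proof is correct and follows essentially the same route as the paper. Both use symmetry of $N$ to reduce to failure of left precompactness, choose a symmetric $Q$ with $Q^2$ inside the witnessing neighbourhood, and pick $d_n$ recursively outside the earlier left translates. Your $d_iQ^2$ in place of the paper's $d_iP$ is an immaterial variation.
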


\begin{proof}
Since $N=N^{-1}$, finite left-cover and finite right-cover precompactness
are equivalent for $N$.  Hence there is a symmetric identity neighbourhood
$P$ such that $N\not\subseteq FP$ for every finite $F\subseteq G$.  Choose
a symmetric identity neighbourhood $Q$ with $Q^2\subseteq P$.  Recursively
select $d_n\in N\setminus\bigcup_{i<n}d_iP$.  If
$d_iQ\cap d_jQ\neq\varnothing$ for $i<j$, then
$d_j\in d_iQQ^{-1}\subseteq d_iP$, contrary to the construction.  Thus
$D=\{d_n:n\in\omega\}$ has the required properties.
\end{proof}

\begin{proof}[Proof of Theorem \ref{thm:countable-thin}]
The assertion is trivial if $A$ is finite.  Assume that $A$ is countably
infinite.  If $G$ is locally precompact, then
Theorem~\ref{thm:local-precompact} implies that $G$ is SIN, and hence every
subset of $G$ is right thin.  We may therefore suppose that $G$ is not
locally precompact.  In particular, no identity neighbourhood is
precompact.

Enumerate $A$ without repetitions as $A=\{a_n:n\in\omega\}$ and fix
$V\in\N_G(\e)$.  By left uniform discreteness, choose a symmetric
$O\in\N_G(\e)$ such that $O\subseteq V$ and the sets $a_nO$ are pairwise
disjoint.  Choose a symmetric $W\in\N_G(\e)$ with $W^3\subseteq O$.

The neighbourhood $W$ is not precompact.  By Lemma~\ref{lem:separated},
there are a countably infinite set $D\subseteq W$ and a symmetric identity
neighbourhood $Q$ such that the sets $dQ$, $d\in D$, are pairwise
disjoint.  Shrinking $Q$ if necessary, assume also that $Q\subseteq W$.
Choose pairwise disjoint finite sets $D_n\subseteq D$ with $|D_n|=n+2$,
and put $C_n=a_nD_n$ and $C=\bigcup_{n\in\omega}C_n$.  Since
$D_n\subseteq W\subseteq O$, the sets $C_n$ are pairwise disjoint.

We claim that the sets $cQ$, $c\in C$, are pairwise disjoint.  Let
$c_1=a_nd_1$ and $c_2=a_md_2$ be distinct, with $d_1\in D_n$ and
$d_2\in D_m$.  If $n\neq m$, then $c_1Q\subseteq a_nW^2\subseteq a_nO$
and $c_2Q\subseteq a_mW^2\subseteq a_mO$, so they are disjoint.  If
$n=m$, then $d_1\neq d_2$, and the disjointness of $d_1Q$ and $d_2Q$
gives $c_1Q\cap c_2Q=\varnothing$.

By Criterion~\ref{crit:ps}, after shrinking if necessary, there is a
symmetric $U\in\N_G(\e)$ such that $UC\subseteq CQ$.  Fix $u\in U$.
For each $x\in C$, the pairwise disjointness of the sets $cQ$ gives a
unique point $\phi_u(x)\in C$ such that $ux\in\phi_u(x)Q$.

The map $\phi_u:C\to C$ is a permutation and
$\phi_{u^{-1}}=\phi_u^{-1}$.  Indeed, if $ux=yq$ with
$y=\phi_u(x)$ and $q\in Q$, then $u^{-1}y=xq^{-1}\in xQ$.  Since
$u^{-1}\in U$ and $Q$ is symmetric, uniqueness gives
$\phi_{u^{-1}}(y)=x$.  Hence
$\phi_{u^{-1}}\circ\phi_u=\operatorname{id}_C$, and the same argument
with $u^{-1}$ in place of $u$ gives the reverse composition.

We next show that $\phi_u(C_n)=C_n$ for every $n\in\omega$.  Fix $n$ and
take $x_i=a_nd_i\in C_n$, where $d_i\in D_n$, for $i=1,2$.  Write
$\phi_u(x_i)=a_{m_i}d_i'$ with $d_i'\in D_{m_i}$.  From
$ua_nd_i\in a_{m_i}d_i'Q$ we obtain
$ua_n\in a_{m_i}d_i'Qd_i^{-1}\subseteq a_{m_i}W^3\subseteq a_{m_i}O$.
Thus $ua_n\in a_{m_1}O\cap a_{m_2}O$, and the disjointness of the sets
$a_kO$ gives $m_1=m_2$.  Therefore, for each $n$, there is some $m$ such
that $\phi_u(C_n)\subseteq C_m$.  Since $\phi_u$ is injective and
$|C_k|=k+2$, we have $n\leq m$.

Take $x\in C_n$ and put $y=\phi_u(x)\in C_m$.  Applying the preceding
argument to $u^{-1}$ and $C_m$, there is some $n'$ such that
$\phi_{u^{-1}}(C_m)\subseteq C_{n'}$, so $m\leq n'$.  But
$x=\phi_{u^{-1}}(y)\in C_{n'}\cap C_n$, and the sets $C_k$ are pairwise
disjoint.  Hence $n'=n$, so $m\leq n$.  Consequently $m=n$, and
$\phi_u(C_n)=C_n$ because $C_n$ is finite.

It follows that for every $u\in U$, $n\in\omega$, and $d\in D_n$, we
have $ua_nd\in a_nD_nQ$.  Hence
$a_n^{-1}ua_n\in D_nQd^{-1}\subseteq DQD^{-1}\subseteq W^3\subseteq
O\subseteq V$.  Therefore $a_n^{-1}Ua_n\subseteq V$ for every
$n\in\omega$.  Equivalently, $U\subseteq\bigcap_{a\in A}aVa^{-1}$, and
$A$ is right thin.
\end{proof}

\end{document}